\newtheorem{thm}{Theorem}
\newtheorem{cor}{Corollary}
\newtheorem{conj}{Conjecture}
\newtheorem{lem}{Lemma}
\theoremstyle{definition}
\theoremstyle{remark}
\begin{document}

\title[{A short Proof of a conjecture by Hirschhorn and Sellers on Overpartitions }]
 {{A short Proof of a conjecture by Hirschhorn and Sellers on Overpartitions }}

\author{ LIUQUAN WANG}

\address{Department of Mathematics, National University of Singapore, Singapore, 119076, Singapore}

\email{mathlqwang@163.com}

\date{April 20, 2014}
\subjclass[2010]{Primary 05A17; Secondary 11P83}

\keywords{overpartition, congruence, sum of squares}


\begin{abstract}
Let $\overline{p}(n)$ be the number of overpartitions of $n$,  we establish and give a short  elementary proof  of the following congruence
\[\overline{p}({{4}^{\alpha }}(40n+35))\equiv 0 \, (\bmod \, 40),\]
where $\alpha ,n $ are nonnegative integers. By letting $\alpha =0$ we proved a conjecture  of  Hirschhorn and Sellers. Some new congruences for $\overline{p}(n)$ modulo 3 and 5 have also been found,  including the following two infinite families of Ramanujan-type congruences: for any integers $n\ge 0$ and $\alpha \ge 1$,
\[\overline{p}({{5}^{2\alpha +1}}(5n+1))\equiv \overline{p}({{5}^{2\alpha +1}}(5n+4))\equiv 0 \, (\bmod  \, 5).\]

\end{abstract}

\maketitle

\section { Introduction and Main Results}
An overpartition of an integer $n$ is a partition wherein the first occurrence of a part may be overlined, and the number of overpartitions of $n$ is denoted by $\overline{p}(n)$. It's well known (see \cite{Lovejoy}, for example) that the generating function for $\overline{p}(n)$  is
\begin{equation}\label{gen}
\sum\limits_{n\ge 0}{\overline{p}(n){{q}^{n}}}=\frac{(-q;q)_{\infty}}{(q;q)_{\infty}}=\frac{1}{\varphi (-q)},
\end{equation}
where ${{(a;q)}_{\infty }}=(1-a)(1-aq)\cdots (1-a{{q}^{n}})\cdots $ is a standard $q$ series notation , and $\varphi (q)=\sum\nolimits_{n=-\infty }^{\infty }{{{q}^{{{n}^{2}}}}}$ is one of Ramanujan's Theta functions.

Overpartition was first introduced by MacMahon \cite{MacMahon} and has received much attention during the past ten years.  There're numerous results concerning
the arithmetic properties of $\overline{p}(n)$, for more information and references, we refer the reader to see \cite{Xia}, \cite{Lovejoy} and \cite{Sellers05} -\cite{Mahlburg}. Here we only mention some results which are related to our work.

In 2005, Hirschhorn and Sellers \cite{Hirschhorn05} gave many Ramanujan-type identities about $\overline{p}(n)$, for example
\begin{equation}\label{p4nid}
\sum\limits_{n\ge 0}{\overline{p}(4n+3)}=8\frac{{{({{q}^{2}};{{q}^{2}})}_{\infty }}({{q}^{4}};{{q}^{4}})_{\infty }^{6}}{(q;q)_{\infty }^{8}},
\end{equation}
which clearly implies $\overline{p}(4n+3)\equiv 0 \, (\bmod \,8)$. Meanwhile, they proposed a curious conjecture:
\begin{conj}\label{conj}
For any integer $n\ge 0$, we have
\[\overline{p}(40n+35)\equiv 0 \,(\bmod \, 40).\]
\end{conj}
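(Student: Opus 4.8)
The plan is to factor $40 = 8\cdot 5$ and establish the congruence separately modulo $8$ and modulo $5$, recombining them by the Chinese Remainder Theorem. The factor $8$ is essentially free: writing $40n+35 = 4(10n+8)+3$ and invoking the Hirschhorn--Sellers identity \eqref{p4nid}, one sees that $\overline{p}(40n+35)$ equals $8$ times the coefficient of $q^{10n+8}$ in the power series $\frac{(q^2;q^2)_{\infty}(q^4;q^4)_{\infty}^6}{(q;q)_{\infty}^8}$, which has integer coefficients; hence $\overline{p}(40n+35)\equiv 0 \pmod 8$ at once. All the real work therefore lies modulo $5$.

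For the modulus $5$ I would start from \eqref{gen} and exploit the ``freshman's dream'' for the theta function. Since $\varphi(-q)=\sum_{n}(-1)^n q^{n^2}$, reducing a fifth power modulo $5$ gives $\varphi(-q)^5\equiv \varphi(-q^5)\pmod 5$, so that $\sum_{n}\overline{p}(n)q^n \equiv \varphi(-q)^4/\varphi(-q^5)\pmod 5$. Because $40n+35\equiv 0\pmod 5$, the next step is to isolate the arithmetic progression $n\equiv 0\pmod 5$ by applying the operator $U_5\colon \sum a_n q^n\mapsto \sum a_{5n}q^n$. As $1/\varphi(-q^5)$ is a series in $q^5$, it factors through $U_5$ and becomes $1/\varphi(-q)$, leaving $\sum_{m}\overline{p}(5m)\,q^m \equiv U_5\!\big(\varphi(-q)^4\big)/\varphi(-q)\pmod 5$.

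The heart of the argument is then to simplify $U_5(\varphi(-q)^4)$. Writing $\varphi(-q)^4=\sum_n (-1)^n r_4(n) q^n$, where $r_4(n)$ counts representations of $n$ as a sum of four squares, Jacobi's formula $r_4(n)=8\sum_{4\nmid d\mid n} d$ together with the multiplicativity of the restricted divisor sum should yield $r_4(5s)\equiv r_4(s)\pmod 5$ for every $s$, and hence the self-similarity $U_5(\varphi(-q)^4)\equiv \varphi(-q)^4\pmod 5$. This collapses the previous line to the remarkably clean congruence $\sum_{m}\overline{p}(5m)\,q^m\equiv \varphi(-q)^3\pmod 5$. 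I expect this identification to be the crux of the proof: it is both the step most likely to require care and the one that makes the argument short, since it strips away all the auxiliary infinite products.

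To finish, I would expand $\varphi(-q)^3=\sum_N (-1)^N r_3(N) q^N$, where $r_3(N)$ counts representations of $N$ as a sum of three squares. Reading off the coefficient at $N=8n+7$ gives $\overline{p}\big(5(8n+7)\big)=\overline{p}(40n+35)\equiv -\,r_3(8n+7)\pmod 5$. By the classical Gauss--Legendre three-square theorem, no integer of the form $8n+7$ is a sum of three squares, so $r_3(8n+7)=0$ and the congruence modulo $5$ follows. Combining this with the modulus $8$ yields $\overline{p}(40n+35)\equiv 0\pmod{40}$, and the case $\alpha=0$ is exactly Conjecture \ref{conj}.
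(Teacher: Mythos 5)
Your proposal is correct and takes essentially the same route as the paper: the paper also obtains the factor $8$ from identity \eqref{p4nid} and the factor $5$ by proving $\overline{p}(5n)\equiv(-1)^n r_3(n)\ (\bmod\ 5)$ (its Theorem \ref{thm1}), via the same chain $\varphi^5\equiv\varphi(q^5)$, $r_4(5n)\equiv r_4(n)\ (\bmod\ 5)$ from Jacobi's formula, and $r_3(8n+7)=0$. Your $U_5$-operator phrasing and your use of $\varphi(-q)$ in place of the paper's substitution $q\mapsto-q$ are only cosmetic differences.
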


In a recent paper, Chen and Xia \cite{Xia} give a proof of Conjecture \ref{conj} by employing 2-disections of quotients of theta functions and $(p,k)$-parametrization of theta functions. Their proof is relatively long and complicate, and our main goal is to give a  short proof. Indeed,  let ${{r}_{k}}(n)$ denotes the number of representations of $n$  as sum of $k$ squares, we find the following arithmetic relation:
\begin{thm}\label{thm1}
For any integer $n\ge 1$, we have
\[\overline{p}(5n)\equiv {{(-1)}^{n}}{{r}_{3}}(n) \, (\bmod \, 5).\]
\end{thm}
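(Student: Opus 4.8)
The plan is to work entirely with the generating function $\sum_{n\ge0}\overline p(n)q^n = 1/\varphi(-q)$ from \eqref{gen} and to reduce everything modulo $5$ to the theta function $\varphi$. First I would record the elementary reformulation that drives the argument: since $\varphi(q)^3=\sum_{n\ge0}r_3(n)q^n$ by the definition of $r_3$, replacing $q$ by $-q$ gives $\varphi(-q)^3=\sum_{n\ge0}(-1)^n r_3(n)q^n$, so the quantity $(-1)^n r_3(n)$ is exactly the coefficient of $q^n$ in $\varphi(-q)^3$. Consequently the theorem is equivalent to the single power-series congruence
\[
\sum_{n\ge0}\overline p(5n)\,q^n \equiv \varphi(-q)^3 \pmod 5 ,
\]
that is, to the statement that applying to $1/\varphi(-q)$ the operator $U_5$ that extracts the coefficients of $q^{5n}$ yields $\varphi(-q)^3$ modulo $5$.

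Next I would pass from $1/\varphi(-q)$ to an expression involving only $\varphi$. By the multinomial theorem modulo the prime $5$ one has $\varphi(-q)^5\equiv\varphi(-q^5)\pmod5$, since $\big(\sum_k(-1)^kq^{k^2}\big)^5\equiv\sum_k(-1)^kq^{5k^2}=\varphi(-q^5)$. As $\varphi(-q)$ has constant term $1$ it is invertible in $\mathbb{F}_5[[q]]$, and dividing gives
\[
\frac{1}{\varphi(-q)}\equiv\frac{\varphi(-q)^4}{\varphi(-q^5)}\pmod5 .
\]
Now $1/\varphi(-q^5)$ is a power series in $q^5$, so extracting the terms $q^{5n}$ from the product distributes cleanly: writing $\varphi(-q)^4=\sum_k a_k q^k$, the coefficient of $q^{5n}$ on the right is $\sum_{j\ge0}a_{5j}\,\overline p(n-j)$. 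In operator form this reads $U_5\big(1/\varphi(-q)\big)\equiv U_5\big(\varphi(-q)^4\big)\cdot\frac{1}{\varphi(-q)}\pmod5$. Comparing with the target of the previous paragraph, the entire theorem is reduced to the single assertion $U_5(\varphi(-q)^4)\equiv\varphi(-q)^4\pmod5$.

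The remaining, and main, obstacle is therefore this self-similarity of $\varphi(-q)^4$, which in terms of coefficients reads $r_4(5n)\equiv r_4(n)\pmod5$ for all $n$ (here $\varphi(-q)^4=\sum_n(-1)^n r_4(n)q^n$ and $(-1)^{5n}=(-1)^n$). For this I would invoke Jacobi's four-square theorem in the form $r_4(n)=8\sum_{d\mid n,\,4\nmid d}d$. The arithmetic function $\sigma^{*}(n):=\sum_{d\mid n,\,4\nmid d}d$ is multiplicative, and for the prime $5$ its local factor at $n=5^{a}u$ with $5\nmid u$ is $1+5+\cdots+5^{a}=\tfrac{5^{a+1}-1}{4}\equiv1\pmod5$, using $4^{-1}\equiv4$ and $5^{a+1}\equiv0$. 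Hence replacing $n$ by $5n$ changes only this local factor, from $\tfrac{5^{a+1}-1}{4}$ to $\tfrac{5^{a+2}-1}{4}$, and both are $\equiv1\pmod5$; therefore $\sigma^{*}(5n)\equiv\sigma^{*}(n)$ and, since $8$ is invertible mod $5$, $r_4(5n)\equiv r_4(n)\pmod5$. Feeding this back gives $U_5\big(1/\varphi(-q)\big)\equiv\varphi(-q)^4\cdot\frac{1}{\varphi(-q)}=\varphi(-q)^3\pmod5$, which is exactly the claimed congruence $\overline p(5n)\equiv(-1)^n r_3(n)\pmod5$.
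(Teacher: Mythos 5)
Your proof is correct and takes essentially the same route as the paper's: both rest on the congruence $\varphi(-q)^5\equiv\varphi(-q^5)\pmod 5$ and on Jacobi's four-square formula giving $r_4(5n)\equiv r_4(n)\pmod 5$, and both extract the $q^{5n}$ terms from $\varphi(-q)^4/\varphi(-q^5)$ to land on $\varphi(-q)^3$. The only differences are cosmetic: you keep the sign inside $\varphi(-q)$ instead of substituting $q\mapsto -q$, and you prove the two auxiliary congruences directly (multinomial theorem mod $5$; multiplicativity of $\sum_{d\mid n,\,4\nmid d}d$) where the paper invokes its Lemma~\ref{palpha} and Lemma~\ref{r48modp}.
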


We have two remarkable corollaries.
\begin{cor}\label{thm1cor1}
For any integers $n\ge 0$ and $\alpha \ge 0$, we have
\[\overline{p}({{4}^{\alpha }}(40n+35))\equiv 0 \,(\bmod \, 5),\]
and
\[{{\overline{p}}}(5\cdot {{4}^{\alpha +1}}n)\equiv {{(-1)}^{n}}{{\overline{p}}}(5n)\, (\bmod \, 5).\]
\end{cor}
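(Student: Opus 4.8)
The plan is to reduce both congruences to Theorem~\ref{thm1} together with two classical facts about the sum-of-three-squares function $r_3$. For the first congruence I would begin by observing that $40n+35=5(8n+7)$, so that
\[
4^{\alpha}(40n+35)=5\cdot 4^{\alpha}(8n+7).
\]
Setting $m=4^{\alpha}(8n+7)\ge 7$, Theorem~\ref{thm1} gives $\overline{p}(5m)\equiv(-1)^{m}r_3(m)\pmod 5$. The key input is the Legendre--Gauss three-square theorem: a nonnegative integer fails to be a sum of three squares precisely when it has the shape $4^{a}(8b+7)$. Since $m$ is exactly of this forbidden form, $r_3(m)=0$, and the first congruence follows at once.

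For the second congruence the case $n=0$ is trivial (both sides equal $\overline{p}(0)=1$), so I would take $n\ge 1$ and apply Theorem~\ref{thm1} to each side. On the left, $\overline{p}(5\cdot 4^{\alpha+1}n)\equiv(-1)^{4^{\alpha+1}n}r_3(4^{\alpha+1}n)\pmod 5$, and here the sign is $+1$ because the exponent $4^{\alpha+1}n$ is even. On the right, $(-1)^{n}\overline{p}(5n)\equiv(-1)^{n}(-1)^{n}r_3(n)\equiv r_3(n)\pmod 5$. Thus the claim reduces to the identity $r_3(4^{\alpha+1}n)=r_3(n)$.

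This identity I would obtain from the classical doubling relation $r_3(4m)=r_3(m)$, valid for every nonnegative integer $m$: reducing $x^2+y^2+z^2=4m$ modulo $4$ forces $x,y,z$ all even, so $(x,y,z)\mapsto(x/2,y/2,z/2)$ is a bijection onto the representations of $m$. Iterating this relation $\alpha+1$ times strips off the factor $4^{\alpha+1}$ and yields $r_3(4^{\alpha+1}n)=r_3(n)$, completing the proof.

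The work here is essentially bookkeeping: the only substantive ingredients are the two standard results on $r_3$, both of which are classical and require no fresh argument. The one subtlety to watch is the hypothesis $n\ge 1$ in Theorem~\ref{thm1}, which forces the separate (trivial) treatment of $n=0$ in the second congruence; the first congruence never meets this issue, since its argument $5\cdot 4^{\alpha}(8n+7)$ is always a positive multiple of $5$.
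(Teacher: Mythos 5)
Your proposal is correct and follows essentially the same route as the paper: the paper's proof is exactly ``apply Theorem~\ref{thm1} and Lemma~\ref{r30}'', where Lemma~\ref{r30} packages the two facts you invoke, namely $r_3(4^{\alpha}(8n+7))=0$ and $r_3(4^{\alpha}n)=r_3(n)$. The only differences are cosmetic: you supply elementary proofs of those classical $r_3$ identities (which the paper simply cites from Grosswald) and you explicitly dispose of the $n=0$ case of the second congruence, a detail the paper glosses over.
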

Let $\alpha=0$ in Corollary \ref{thm1cor1}, we get $\overline{p}(40n+35)\equiv 0 \,(\bmod \, 5)$, combining this with (\ref{p4nid}), Conjecture \ref{conj} follows immediately.

\begin{cor}\label{Tre}
For any prime $p\equiv -1 \, (\bmod \, 5)$, we have
\[\overline{p}(5{{p}^{3}}n)\equiv 0 \, (\bmod  \, 5)\]
 for all $n$ coprime to $p$.
\end{cor}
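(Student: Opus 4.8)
The plan is to reduce everything to a divisibility statement about $r_3$ via Theorem~\ref{thm1}, and then extract that divisibility from the modular nature of the three-squares function. First I would apply Theorem~\ref{thm1} with $p^3 n$ in place of $n$. Since $p$ is an odd prime and $n\ge 1$, we have $p^3 n\ge 1$, so
\[
\overline{p}(5 p^3 n)\equiv (-1)^{p^3 n}\, r_3(p^3 n)\pmod 5 .
\]
Because $(-1)^{p^3 n}=\pm 1$ is a unit modulo $5$, the corollary is equivalent to showing that $r_3(p^3 n)\equiv 0\pmod 5$ whenever $p\equiv -1\pmod 5$ is prime and $\gcd(n,p)=1$.

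The key input is that $\varphi(q)^3=\sum_{n\ge 0} r_3(n)\,q^n$ is the $q$-expansion of a modular form of weight $3/2$ (on $\Gamma_0(4)$), which is in fact a Hecke eigenform for the operators $T_{p^2}$. This yields the classical recursion: for every odd prime $p$ and every integer $m\ge 1$,
\[
r_3(p^2 m)=\left(p+1-\left(\frac{-m}{p}\right)\right) r_3(m)-p\, r_3(m/p^2),
\]
with the convention $r_3(x)=0$ for $x\notin\mathbb{Z}$. (This is classical and can equally be read off from Gauss's formula expressing $r_3(n)$ through class numbers.)

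I would then apply this identity with $m=pn$. Since $p\mid pn$, the Legendre symbol $\left(\frac{-pn}{p}\right)$ vanishes, and since $p\nmid n$ the argument $pn/p^2=n/p$ is not an integer, so $r_3(pn/p^2)=0$. Hence the recursion collapses to
\[
r_3(p^3 n)=(p+1)\, r_3(pn).
\]
Finally, the hypothesis $p\equiv -1\pmod 5$ gives $p+1\equiv 0\pmod 5$, so $r_3(p^3 n)\equiv 0\pmod 5$, and the corollary follows from the displayed congruence above.

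The only nontrivial ingredient is the $T_{p^2}$-recursion for $r_3$; everything else is a one-line substitution together with the observation $p+1\equiv 0$. I expect the recursion to be the sole obstacle: the statement is standard, but to keep the exposition elementary one must supply or cite a self-contained derivation—for instance via Gauss's class-number formula or a direct lattice-counting argument—rather than invoking the full machinery of half-integral-weight Hecke operators. Matching this to the paper's elementary tone is where the care is needed.
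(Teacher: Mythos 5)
Your proof is correct and is essentially the paper's own argument: the paper likewise combines Theorem~\ref{thm1} with the recursion $r_3(p^2m)=\left(p+1-\left(\frac{-m}{p}\right)\right)r_3(m)-p\,r_3(m/p^2)$ (its Lemma~\ref{r3relation} with $\alpha=1$) applied at $m=pn$, and concludes from $p+1\equiv 0\pmod 5$. The ``sole obstacle'' you identify is a non-issue in context: that recursion is already stated in the paper as Lemma~\ref{r3relation}, cited from Hirschhorn and Sellers, so no self-contained derivation via Hecke operators or class numbers is required.
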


Corollary \ref{Tre} was first proved by  Treneer (see Proposition 1.4 in  \cite{Treneer}) in 2006 using the theory of modular forms, which is  not elementary.

Furthermore, with Corollary \ref{thm1cor1} in mind, we are able to generalize Conjecture \ref{conj} to the following
\begin{thm}\label{genthm}
For any integers $n\ge 0$ and $\alpha \ge 0$, we have
\[\overline{p}({{4}^{\alpha }}(40n+35))\equiv 0 \, (\bmod \, 40).\]
\end{thm}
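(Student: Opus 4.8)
The plan is to prove Theorem \ref{genthm} by combining the two congruences $\overline{p}({{4}^{\alpha }}(40n+35))\equiv 0\,(\bmod\,5)$ and $\overline{p}({{4}^{\alpha }}(40n+35))\equiv 0\,(\bmod\,8)$ and then invoking the Chinese Remainder Theorem, since $\gcd(5,8)=1$ and $5\cdot 8=40$. The first of these two congruences is already available to me: it is precisely the first assertion of Corollary \ref{thm1cor1}, which holds for every $\alpha\ge 0$. So the entire burden of the proof is to establish the companion congruence modulo $8$.

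For the modulus $8$ I would proceed by induction on $\alpha$. The base case $\alpha=0$ requires $\overline{p}(40n+35)\equiv 0\,(\bmod\,8)$; since $40n+35=4(10n+8)+3$ is of the form $4m+3$, this follows at once from the identity \eqref{p4nid}, which exhibits $\sum_{n\ge 0}\overline{p}(4n+3)q^{n}$ as $8$ times a power series with integer coefficients and hence gives $\overline{p}(4n+3)\equiv 0\,(\bmod\,8)$ for all $n$. For the inductive step I need a relation that lifts a congruence at level ${{4}^{\alpha }}m$ to level ${{4}^{\alpha +1}}m$. The natural tool is a $2$-dissection-type identity for $\overline{p}$ that expresses $\overline{p}(4m)$ (or the generating function restricted to indices divisible by $4$) in terms of $\overline{p}$ at smaller arguments together with an explicit factor carrying a power of $2$. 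Concretely, I would look for a congruence of the shape $\overline{p}(4m)\equiv c\,\overline{p}(m)\,(\bmod\,8)$, or more generally an identity relating the series $\sum\overline{p}(4n)q^{n}$ to $1/\varphi(-q)$ modulo $8$, so that applying it $\alpha$ times reduces $\overline{p}({{4}^{\alpha }}(40n+35))$ modulo $8$ to the base case $\overline{p}(40n+35)$.

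The main obstacle will be producing this lifting relation modulo $8$ cleanly. Working from \eqref{gen}, one has $1/\varphi(-q)=(-q;q)_{\infty}/(q;q)_{\infty}$, and I would extract the arithmetic progression of indices divisible by $4$ by repeatedly applying the standard $2$-dissection of $\varphi(-q)$ and of the attendant infinite products, using the fact that $(q;q)_{\infty}^{2}$ and similar squared products admit convenient expansions modulo small powers of $2$. The delicate point is controlling the $2$-adic valuations of the coefficients through two successive dissections so that the resulting congruence is genuinely modulo $8$ rather than merely modulo $2$ or $4$; this is exactly where the factor $8$ in \eqref{p4nid} must be shown to persist at each level. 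I expect that, after the dissection identities are in hand, the induction closes immediately and the Chinese Remainder Theorem then assembles the two prime-power congruences into $\overline{p}({{4}^{\alpha }}(40n+35))\equiv 0\,(\bmod\,40)$, completing the proof.
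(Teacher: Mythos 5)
Your overall architecture matches the paper's: split $40=5\cdot 8$, get the modulus $5$ part from Corollary \ref{thm1cor1}, and combine (the ``Chinese Remainder Theorem'' here is just the observation that divisibility by $5$ and by $8$ gives divisibility by $40$). The modulus $5$ half is therefore fine. The gap is entirely in your modulus $8$ half: your induction rests on a lifting relation that you never establish, and you explicitly defer it (``I would look for\dots'', ``I expect that, after the dissection identities are in hand\dots''). Worse, the specific shape you propose, $\overline{p}(4m)\equiv c\,\overline{p}(m)\ (\bmod\ 8)$ with a fixed constant $c$, is \emph{false}: from $\overline{p}(1)=2$, $\overline{p}(4)=14$ one needs $c\equiv 3\ (\bmod\ 4)$, while from $\overline{p}(4)=14$, $\overline{p}(16)=2062$ one needs $c\equiv 1\ (\bmod\ 4)$. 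The correct statement, provable by the $2$-dissection route you sketch (using $\varphi(q)\varphi(-q)=\varphi(-q^2)^2$ and $\varphi(q)^4\equiv\varphi(-q)^4\ (\bmod\ 16)$), is $\overline{p}(4m)\equiv (-1)^{m}\,\overline{p}(m)\ (\bmod\ 16)$; the sign depends on the parity of $m$. With that corrected relation your induction would indeed close, since a sign is harmless when proving a congruence to $0$, but as written the proposal both asserts a false identity and leaves the actual work undone.

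You also missed a much shorter route that the paper takes, using a tool already listed in its Preliminaries: Lemma \ref{mod8} (Kim's theorem) says $\overline{p}(N)\equiv 0\ (\bmod\ 8)$ whenever $N$ is neither a square nor twice a square. The paper simply checks that $4^{\alpha}(40n+35)$ has this property: it suffices to check it for $40n+35=5(8n+7)$ (multiplying by the perfect square $4^{\alpha}$ changes nothing); this number is odd, hence not twice a square, and if $5(8n+7)=x^2$ then $5\mid x$, so $8n+7=5y^2$ with $y$ odd, contradicting $5y^2\equiv 5\ (\bmod\ 8)$. This replaces your entire induction and dissection machinery with a three-line elementary argument, and it needs neither the identity \eqref{p4nid} nor any $2$-dissection.
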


Some miscellaneous congruences can be deduced from Theorem \ref{thm1}, we list some of them here.
\begin{thm}\label{5alpha}
For any integers $\alpha \ge 1$ and $n \ge 0$, we have
\[\overline{p}({{5}^{2\alpha +1}}(5n+1))\equiv \overline{p}({{5}^{2\alpha +1}}(5n+4))\equiv 0 \, (\bmod \, 5).\]
\end{thm}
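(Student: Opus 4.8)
The plan is to deduce the theorem from Theorem~\ref{thm1} by reducing it to a divisibility statement for $r_3$. For $\delta\in\{1,4\}$ write $5^{2\alpha+1}(5n+\delta)=5M_\delta$ with $M_\delta=5^{2\alpha}(5n+\delta)$. Since $\alpha\ge 1$ we have $M_\delta\ge 1$, so Theorem~\ref{thm1} gives
\[\overline{p}(5M_\delta)\equiv(-1)^{M_\delta}r_3(M_\delta)\pmod 5,\]
and it suffices to show $r_3(M_\delta)\equiv 0\pmod 5$. Writing $m=5n+\delta$, note that $5\nmid m$ and $m\equiv\pm1\pmod 5$, which are exactly the nonzero residues with $\left(\frac{m}{5}\right)=1$. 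Thus the whole problem becomes: for $\alpha\ge 1$ and $m\equiv\pm1\pmod 5$, prove $r_3(5^{2\alpha}m)\equiv 0\pmod 5$.

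The key input is the classical recurrence for sums of three squares coming from the Hecke operator at $p$ acting on the weight $3/2$ theta series $\sum_n r_3(n)q^n$; for $p=5$ it takes the form
\[r_3(25N)=\left(6-\left(\frac{-N}{5}\right)\right)r_3(N)-5\,r_3(N/25),\]
with the convention $r_3(N/25)=0$ when $25\nmid N$. Reducing modulo $5$ kills the last term and replaces $6$ by $1$, so
\[r_3(25N)\equiv\left(1-\left(\frac{-N}{5}\right)\right)r_3(N)\pmod 5.\]
Since $\left(\frac{-1}{5}\right)=1$, for $5\nmid N$ one has $\left(\frac{-N}{5}\right)=\left(\frac{N}{5}\right)$, which equals $1$ exactly when $N\equiv\pm1\pmod 5$, while for $5\mid N$ the symbol vanishes.

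I would then induct on $\alpha$. For the base case $\alpha=1$, take $N=m$ with $m\equiv\pm1\pmod 5$: then $25\nmid m$ and $\left(\frac{-m}{5}\right)=1$, so $r_3(25m)\equiv(1-1)r_3(m)=0\pmod 5$. For the inductive step $\alpha\ge 2$, take $N=5^{2\alpha-2}m$; since $5\mid N$ the symbol is $0$ and the congruence collapses to $r_3(5^{2\alpha}m)\equiv r_3(5^{2\alpha-2}m)\pmod 5$. By the induction hypothesis the right-hand side is $0$, so $r_3(5^{2\alpha}m)\equiv 0\pmod 5$ for all $\alpha\ge 1$, and combined with the first paragraph this proves the theorem.

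The main obstacle is supplying the three-squares recurrence cleanly: its natural proof runs through the theory of Hecke operators on half-integral weight modular forms (the Shimura lift of $\sum_n r_3(n)q^n$ is a weight $2$ Eisenstein series, forcing the $T_{25}$-eigenvalue $5+1=6$), which is not elementary. For a genuinely short proof I would instead quote the known Gauss-type formula for $r_3$ in terms of class numbers, or the explicit recurrence as recorded in the literature, noting that only its mod $5$ shape is needed --- namely that the coefficient $1-\left(\frac{m}{5}\right)$ vanishes at quadratic residues and that multiplication by $25$ acts as the identity modulo $5$ once $5\mid N$. Everything else is routine bookkeeping.
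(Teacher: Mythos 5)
Your proof is correct and is essentially the paper's own argument: both reduce via Theorem \ref{thm1} to showing $r_3(5^{2\alpha}(5n+\delta))\equiv 0 \pmod 5$ for $\delta\in\{1,4\}$ and then apply the three-squares recurrence, the only difference being that you use the $p=5$, $\alpha=1$ case of Lemma \ref{r3relation} plus an induction on $\alpha$, while the paper substitutes directly into the general-$\alpha$ form of that lemma and notes that both geometric sums are $\equiv 1 \pmod 5$. Also, the recurrence you were worried about justifying is precisely the paper's Lemma \ref{r3relation}, quoted from the elementary paper \cite{Hsquare} of Hirschhorn and Sellers, so no Hecke-operator or half-integral-weight input is needed.
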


\begin{thm}\label{2case}
Let $p\ge 3$ be a prime, $N$ a positive integer which is coprime to $p$. Let $\alpha $ be any nonnegative integer. \\
(1) If $p\equiv 1 \, (\bmod  \, 5)$, then $\overline{p}(5{{p}^{10\alpha +9}}N)\equiv 0 \, (\bmod  \, 5).$ \\
(2) If $p\equiv 2,3,4 \, (\bmod  \, 5)$, then $\overline{p}(5{{p}^{8\alpha +7}}N)\equiv 0 \, (\bmod  \, 5).$
\end{thm}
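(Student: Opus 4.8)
The plan is to reduce everything to a statement about sums of three squares via Theorem \ref{thm1} and then to exploit the recursive behavior of $r_3$ on powers of an odd prime. Since $\gcd(N,p)=1$ and $p$ is odd, in both cases the argument $5p^{k}N$ has the form $5m$ with $m=p^{k}N\ge 1$, so Theorem \ref{thm1} gives $\overline{p}(5p^{k}N)\equiv (-1)^{p^{k}N}r_3(p^{k}N)\pmod 5$. The factor $(-1)^{p^{k}N}$ is a unit modulo $5$, so it suffices to prove $r_3(p^{k}N)\equiv 0\pmod 5$ for $k=10\alpha+9$ in case (1) and $k=8\alpha+7$ in case (2). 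Note that both exponents are odd, and that $p\not\equiv 0\pmod 5$ throughout (case (1) has $p\equiv 1$, case (2) has $p\equiv 2,3,4$).

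Next I would record the classical recursion for three squares. The generating function $\theta(q)^3=\sum_{n\ge0}r_3(n)q^{n}$ spans the one-dimensional space of modular forms of weight $3/2$ on $\Gamma_0(4)$, hence is a Hecke eigenform; for each odd prime $p$ the operator $T_{p^2}$ acts with eigenvalue $p+1$, which yields the identity
\[
r_3(p^{2}m)=\Big(p+1-\big(\tfrac{-m}{p}\big)\Big)r_3(m)-p\,r_3\!\big(m/p^{2}\big),
\]
valid for all $m\ge1$ with the convention $r_3(x)=0$ when $x\notin\mathbb{Z}$. (One can also cite this as a classical consequence of Gauss's three-squares formula.) Applying it with $m=p^{j}N$ and using $\big(\tfrac{-p^{j}N}{p}\big)=0$ for $j\ge1$, the odd-indexed quantities $b_i:=r_3(p^{2i+1}N)$ satisfy $b_{i+1}=(p+1)b_i-p\,b_{i-1}$ for $i\ge1$, together with $b_1=(p+1)b_0$ (equivalently $b_{-1}=0$). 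The characteristic roots are $1$ and $p$, so solving the recurrence gives the clean closed form
\[
r_3(p^{2i+1}N)=r_3(pN)\,(1+p+p^{2}+\cdots+p^{i})=r_3(pN)\,\frac{p^{i+1}-1}{p-1}.
\]

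Finally I would reduce the two cases to elementary congruences for the geometric factor. In case (1) we have $2i+1=10\alpha+9$, so the factor is a sum of $i+1=5(\alpha+1)$ terms, each $\equiv 1\pmod 5$ because $p\equiv1$, hence the sum $\equiv 5(\alpha+1)\equiv0\pmod 5$. In case (2) we have $2i+1=8\alpha+7$, so the factor has $i+1=4(\alpha+1)$ terms; since $p\equiv 2,3,4\pmod 5$ has multiplicative order dividing $4$, we may group the powers into $\alpha+1$ consecutive blocks $p^{4s}(1+p+p^{2}+p^{3})$, and a direct check gives $1+p+p^{2}+p^{3}\equiv0\pmod 5$ in each residue class, so again the factor $\equiv0\pmod 5$. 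In both cases $r_3(p^{k}N)\equiv0\pmod5$, and the reduction of the first paragraph completes the proof.

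The main obstacle is the recursion step: one must justify the $r_3(p^2 m)$ identity (and its eigenvalue $p+1$) at the level of rigor the paper demands, and then solve the resulting linear recurrence to isolate the geometric factor $\frac{p^{i+1}-1}{p-1}$. Once that factorization is in hand, the divisibility by $5$ is purely a matter of the order of $p$ modulo $5$—which is exactly why the exponents $10\alpha+9$ and $8\alpha+7$ (producing $5(\alpha+1)$ and $4(\alpha+1)$ terms) are the right ones—and the value of $r_3(pN)$ never needs to be computed.
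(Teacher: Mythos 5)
Your proposal is correct and follows essentially the same route as the paper: reduce via Theorem \ref{thm1} to showing $r_3(p^{10\alpha+9}N)\equiv 0$ resp.\ $r_3(p^{8\alpha+7}N)\equiv 0 \ (\bmod\ 5)$, obtain the factorization $r_3(p^{2i+1}N)=(1+p+\cdots+p^i)\,r_3(pN)$, and finish by noting that the geometric factor has $5(\alpha+1)$ resp.\ $4(\alpha+1)$ terms and hence vanishes modulo $5$. The only real difference is bookkeeping: the paper gets the closed form in a single step by citing Lemma \ref{r3relation} with $n=pN$ (and $\alpha$ replaced by $5\alpha+4$ resp.\ $4\alpha+3$), whereas you rederive it from the one-step recursion by solving a linear recurrence and justify that recursion through Hecke-operator theory on weight $3/2$ forms --- since your one-step identity is exactly the $\alpha=1$ case of the paper's Lemma \ref{r3relation}, you could cite that lemma directly and keep the whole argument elementary, in line with the paper's stated purpose.
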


Finally, we mention that in 2011, based on the generating function of $\overline{p}(3n)$ discovered by Hirschhorn and Sellers \cite{Sellers05},  Lovejoy and Osburn \cite{Osburn} proved: for any integer $n\ge 1$, we have
\[\overline{p}(3n)\equiv {{(-1)}^{n}}{{r}_{5}}(n)\, (\bmod \, 3).\]
Using the same  method  from the proof of Theorem \ref{thm1}, we are able to improve this congruence to the following one.
\begin{thm} \label{Lov}
For any integer $n\ge 1$, we have
\[\overline{p}(3n)\equiv {{(-1)}^{n}}{{r}_{5}}(n)\, (\bmod \, 9).\]
\end{thm}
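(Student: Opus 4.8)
The plan is to mirror the argument used for Theorem~\ref{thm1}, moving the prime from $5$ to $3$ and sharpening the modulus from $3$ to $9$. First I would recast the claim as a generating-function congruence. Writing $\varphi(-q)=\sum_{k}(-1)^{k}q^{k^{2}}$ and using $k^{2}\equiv k\pmod 2$, one finds
\[
\varphi(-q)^{5}=\sum_{k_{1},\dots,k_{5}}(-1)^{k_{1}+\cdots+k_{5}}q^{k_{1}^{2}+\cdots+k_{5}^{2}}=\sum_{n\ge 0}(-1)^{n}r_{5}(n)q^{n},
\]
since $k_{1}+\cdots+k_{5}\equiv k_{1}^{2}+\cdots+k_{5}^{2}=n\pmod 2$. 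Hence Theorem~\ref{Lov} is equivalent to the single congruence
\[
\sum_{n\ge 0}\overline{p}(3n)q^{n}\equiv \varphi(-q)^{5}\pmod 9,
\]
so it suffices to pin down, modulo $9$, the part of $1/\varphi(-q)=\sum_{n}\overline{p}(n)q^{n}$ supported on exponents divisible by $3$.

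The next step is to reduce $1/\varphi(-q)$ modulo $9$. The elementary fact $A(q)^{3}\equiv A(q^{3})\pmod 3$, valid for any power series $A$ with integer coefficients, upgrades (on writing $A(q)^{3}=A(q^{3})+3B(q)$ and cubing) to $A(q)^{9}\equiv A(q^{3})^{3}\pmod 9$. Taking $A=\varphi(-q)$ gives $\varphi(-q)^{9}\equiv\varphi(-q^{3})^{3}\pmod 9$, so that
\[
\frac{1}{\varphi(-q)}=\frac{\varphi(-q)^{8}}{\varphi(-q)^{9}}\equiv\frac{\varphi(-q)^{8}}{\varphi(-q^{3})^{3}}\pmod 9.
\]
For a power series $F$, let $[F]_{0}$ denote the sum of the terms of $F$ whose exponent is divisible by $3$. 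Since $\varphi(-q^{3})^{3}$ is itself a series in $q^{3}$, clearing denominators and extracting the exponents divisible by $3$ yields
\[
\varphi(-q^{3})^{3}\sum_{n\ge 0}\overline{p}(3n)q^{3n}\equiv \big[\varphi(-q)^{8}\big]_{0}\pmod 9.
\]
Comparing with the reformulation, and noting that $\varphi(-q^{3})^{3}$ has unit constant term and is therefore invertible over $\mathbb{Z}/9\mathbb{Z}$, the theorem reduces to proving $\big[\varphi(-q)^{8}\big]_{0}\equiv\varphi(-q^{3})^{8}\pmod 9$, after which dividing by $\varphi(-q^{3})^{3}$ and replacing $q^{3}$ by $q$ finishes the argument.

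The crux, which I expect to be the main obstacle, is exactly this $3$-dissection evaluation
\[
\big[\varphi(-q)^{8}\big]_{0}\equiv\varphi(-q^{3})^{8}\pmod 9.
\]
It cannot be obtained from the Frobenius trick alone: writing $\varphi(-q)^{8}=\varphi(-q)^{9}/\varphi(-q)$ only reintroduces $1/\varphi(-q)$ and runs in a circle. Instead I would use the explicit $3$-dissection of the theta function $\varphi(-q)$, splitting $\sum_{k}(-1)^{k}q^{k^{2}}$ according to whether $k\equiv 0\pmod 3$ (giving exponents $\equiv 0$) or $k\not\equiv 0\pmod 3$ (giving exponents $\equiv 1$), so that no exponent $\equiv 2\pmod 3$ occurs. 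Raising the resulting two-part expression to the eighth power, collecting the multinomial terms whose total exponent is divisible by $3$, and reducing the binomial coefficients modulo $9$ then produces the claimed piece; the $3$-dissection identities for $\varphi$ used here are precisely the theta-function inputs, and in practice one may instead quote the Hirschhorn--Sellers generating function for $\overline{p}(3n)$, which already encodes this dissection, and simply reduce it modulo $9$. This is the exact analogue of the step for $\varphi(-q)^{4}$ and the prime $5$ in the proof of Theorem~\ref{thm1}.
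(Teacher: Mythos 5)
Your reduction follows the paper's skeleton exactly: the Frobenius-type congruence $\varphi(-q)^{9}\equiv\varphi(-q^{3})^{3}\pmod 9$, the extraction of exponents divisible by $3$, and division by the unit $\varphi(-q^{3})^{3}$ all appear in the paper's proof, and you correctly isolate the crux as the dissection congruence $\big[\varphi(-q)^{8}\big]_{0}\equiv\varphi(-q^{3})^{8}\pmod 9$. But that crux is exactly what you do not prove, and the method you sketch for it fails. Writing $\varphi(-q)=\varphi(-q^{9})-2qW(q^{3})$ with $W(q)=\sum_{j}(-1)^{j}q^{3j^{2}+2j}$ (your observation that the non-multiple-of-$3$ part contributes only exponents $\equiv 1\pmod 3$ is right), the terms of $\varphi(-q)^{8}$ with exponent divisible by $3$ are those with $i\in\{0,3,6\}$ in the binomial expansion:
\[
\big[\varphi(-q)^{8}\big]_{0}=\varphi(-q^{9})^{8}+\binom{8}{3}(-2)^{3}q^{3}\varphi(-q^{9})^{5}W(q^{3})^{3}+\binom{8}{6}(-2)^{6}q^{6}\varphi(-q^{9})^{2}W(q^{3})^{6},
\]
and the coefficients $\binom{8}{3}(-2)^{3}=-448\equiv 2\pmod 9$ and $\binom{8}{6}(-2)^{6}=1792\equiv 1\pmod 9$ do \emph{not} vanish. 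So ``reducing the binomial coefficients modulo $9$'' leaves two genuine cross terms, and identifying the resulting three-term sum with $\varphi(-q^{3})^{8}$ modulo $9$ would require nontrivial theta-function identities relating $\varphi(-q^{9})$, $W(q^{3})$ and $\varphi(-q^{3})$ that you have not supplied; this is precisely the kind of longer dissection computation (in the style of Chen--Xia) that the paper is designed to avoid. Your fallback --- quoting the Hirschhorn--Sellers generating function for $\overline{p}(3n)$ and ``simply reducing it modulo $9$'' --- is also not a proof: that route is how Lovejoy--Osburn obtained the congruence modulo $3$, and upgrading it to modulo $9$ is not a routine reduction and is nowhere carried out.

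The missing idea is the paper's pair of lemmas on sums of eight squares (Lemma \ref{r48} and Lemma \ref{r48modp}). Jacobi's formula $r_{8}(n)=16(-1)^{n}\sum_{d\mid n}(-1)^{d}d^{3}$ gives, by splitting divisors according to whether $3\mid d$, the elementary congruence $r_{8}(3n)\equiv r_{8}(n)\pmod{27}$: the divisors of $3n$ prime to $3$ are exactly the divisors of $n$ prime to $3$, while divisors divisible by $3$ contribute multiples of $3^{3}$. Since $\varphi(q)^{8}=\sum_{n\ge 0}r_{8}(n)q^{n}$, this instantly yields
\[
\big[\varphi(q)^{8}\big]_{0}=\sum_{n\ge 0}r_{8}(3n)q^{3n}\equiv\sum_{n\ge 0}r_{8}(n)q^{3n}=\varphi(q^{3})^{8}\pmod{27},
\]
which is your crux identity (even modulo $27$, and with $q\to -q$ if one prefers your normalization). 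With this arithmetic input your argument closes; without it, the proposal has a hole at the decisive step.
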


Similar to Theorem \ref{2case}, we can deduce the following interesting congruences from Theorem \ref{Lov}.
\begin{thm}\label{Lovcor}
 Let $p\ge 3$ be a prime and $N$ a positive integer which is coprime to $p$. \\
(1) If $p\equiv 1 \, (\bmod  \, 3)$, then $\overline{p}(3{{p}^{6\alpha +5}}N)\equiv 0 \, (\bmod  \, 3)$, and $\overline{p}(3{{p}^{18\alpha +17}}N)\equiv 0 \, (\bmod \, 9).$ \\
(2) If $p\equiv 2 \, (\bmod  \, 3)$, then $\overline{p}(3{{p}^{4\alpha +3}}N)\equiv 0 \, (\bmod  \, 9).$
\end{thm}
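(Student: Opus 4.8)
The plan is to reduce everything to divisibility properties of $r_5$ via Theorem \ref{Lov} and then exploit the multiplicative structure of the representation numbers, exactly as one does for $r_3$ when proving Theorem \ref{2case}. Writing $n = p^{a}N$ with $p\nmid N$, Theorem \ref{Lov} gives $\overline{p}(3p^{a}N)\equiv (-1)^{p^{a}N}r_5(p^{a}N)\pmod 9$, and since $p$ is odd the sign is a unit. In each of the three assertions the exponent $a$ is odd ($a=6\alpha+5$, $18\alpha+17$, or $4\alpha+3$), so it suffices to prove that $r_5(p^{a}N)$ is divisible by $3$ (for the first congruence in (1)) or by $9$ (for the remaining ones).

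The key step is the identity, valid for every odd prime $p$, every $N$ with $p\nmid N$, and every $k\ge 0$,
\[ r_5\bigl(p^{2k+1}N\bigr)=r_5(pN)\,\frac{p^{3(k+1)}-1}{p^{3}-1}=r_5(pN)\bigl(1+p^{3}+\cdots+p^{3k}\bigr). \]
To obtain it I would use the Hecke operator $T(p^{2})$ acting on the weight $5/2$ theta series $\varphi^{5}$, whose coefficients are the $r_5(n)$. Exactly as in the weight $3/2$ situation underlying Theorem \ref{2case} (where the relevant eigenvalue is $1+p$), here $\varphi^{5}$ behaves as a Hecke eigenform at odd primes with eigenvalue $1+p^{3}$, giving
\[ r_5(p^{2}n)+\left(\frac{n}{p}\right)p\,r_5(n)+p^{3}\,r_5(n/p^{2})=(1+p^{3})\,r_5(n). \]
Setting $n=p^{2k+1}N$, the Legendre symbol vanishes since $p\mid n$, so $y_{k}:=r_5(p^{2k+1}N)$ satisfies $y_{k+1}=(1+p^{3})y_{k}-p^{3}y_{k-1}$ with $y_{-1}=0$. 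The characteristic roots are $1$ and $p^{3}$, and solving with this initial data yields the displayed identity. I expect this recursion — i.e.\ checking that $\varphi^{5}$ acts as a Hecke eigenform at odd primes — to be the main obstacle; it is the only non-elementary ingredient, and it is the precise analogue of the $r_3$ relation already exploited for Theorem \ref{2case}.

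The remainder is elementary arithmetic modulo $9$. If $p\equiv 1\pmod 3$ then $p^{3}\equiv 1\pmod 9$, so $1+p^{3}+\cdots+p^{3k}\equiv k+1\pmod 9$; if $p\equiv 2\pmod 3$ then $p^{3}\equiv -1\pmod 9$, so $1+p^{3}+\cdots+p^{3k}\equiv\sum_{j=0}^{k}(-1)^{j}\pmod 9$, which vanishes precisely when $k$ is odd. Now I match exponents. For (1) with $a=6\alpha+5$ we have $k=3\alpha+2$, so $k+1=3(\alpha+1)\equiv 0\pmod 3$, giving $r_5(p^{a}N)\equiv 0\pmod 3$; with $a=18\alpha+17$ we have $k=9\alpha+8$, so $k+1=9(\alpha+1)\equiv 0\pmod 9$, giving $r_5(p^{a}N)\equiv 0\pmod 9$. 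For (2) with $a=4\alpha+3$ we have $k=2\alpha+1$ odd, so the sum vanishes modulo $9$ and $r_5(p^{a}N)\equiv 0\pmod 9$. Since $r_5(pN)$ is an integer, the factor $1+p^{3}+\cdots+p^{3k}$ carries the divisibility through the identity above, and feeding these into Theorem \ref{Lov} (reducing modulo $3$ in the first case of (1)) yields the claimed congruences for $\overline{p}$.
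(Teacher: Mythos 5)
Your proposal is correct and takes essentially the same route as the paper: reduce via Theorem \ref{Lov} to showing that $3$ (resp.\ $9$) divides $r_5(p^{2k+1}N)$, then exploit the identity $r_5(p^{2k+1}N)=\bigl(1+p^3+\cdots+p^{3k}\bigr)r_5(pN)$ (valid because the Legendre symbol vanishes when $p$ divides the argument) together with the same arithmetic modulo $9$ in the three cases. The only difference is that what you flag as the ``main obstacle'' --- establishing that identity via the Hecke action on $\varphi^5$ --- is unnecessary work here, since it is precisely the specialization $n=pN$, $\alpha=k$ of Lemma \ref{r5relation} (Cooper's formula), which the paper simply cites.
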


\section{Preliminaries}
\begin{lem}[cf. Lemma 1.2 in \cite{pod}]\label{palpha}
Let $p$ be a prime and $\alpha $ a positive integer. Then
\[(q;q)_{\infty }^{{{p}^{\alpha }}}\equiv ({{q}^{p}};{{q}^{p}})_{\infty }^{{{p}^{\alpha -1}}} \, (\bmod  \, {{p}^{\alpha }}).\]
\end{lem}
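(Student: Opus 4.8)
The plan is to prove the congruence by induction on $\alpha$, working entirely in the ring of formal power series $\mathbb{Z}[[q]]$, where a congruence modulo $p^{\alpha}$ simply means coefficientwise divisibility. The engine of the whole argument is the ``freshman's dream'': for any $f(q)=\sum_{n\ge 0}a_n q^n\in\mathbb{Z}[[q]]$ one has $f(q)^p\equiv f(q^p)\pmod{p}$. I would establish this first by expanding $f(q)^p$ via the multinomial theorem and observing that, since $p$ is prime, every genuinely mixed term carries a multinomial coefficient divisible by $p$; hence only the diagonal contributions $a_n^p q^{np}$ survive modulo $p$, and Fermat's little theorem replaces each $a_n^p$ by $a_n$, leaving $\sum_n a_n q^{np}=f(q^p)$. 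Applying this with $f(q)=(q;q)_{\infty}$, for which $f(q^p)=(q^p;q^p)_{\infty}$, gives the base case $\alpha=1$, namely $(q;q)_{\infty}^p\equiv(q^p;q^p)_{\infty}\pmod{p}$.

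The inductive step rests on a lifting lemma: if $A\equiv B\pmod{p^m}$ with $m\ge 1$, then $A^p\equiv B^p\pmod{p^{m+1}}$. To see this I would write $A=B+p^m C$ and expand $(B+p^mC)^p$ binomially. The $k=1$ term contributes $\binom{p}{1}p^m=p^{m+1}$; for $2\le k\le p-1$ the factor $\binom{p}{k}$ supplies one power of $p$ while $(p^m)^k$ supplies at least $2m$ more; and the $k=p$ term contributes $p^{mp}$. In every case the $p$-adic valuation is at least $m+1$, so $A^p\equiv B^p\pmod{p^{m+1}}$. Now, assuming the statement at level $\alpha-1$, that is $(q;q)_{\infty}^{p^{\alpha-1}}\equiv(q^p;q^p)_{\infty}^{p^{\alpha-2}}\pmod{p^{\alpha-1}}$, I would raise both sides to the $p$-th power using the lifting lemma with $m=\alpha-1$. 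This yields at once $(q;q)_{\infty}^{p^{\alpha}}\equiv(q^p;q^p)_{\infty}^{p^{\alpha-1}}\pmod{p^{\alpha}}$, completing the induction.

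The main point to get right is the $p$-adic valuation bookkeeping in the lifting lemma, in particular checking that the extreme term $k=p$ still has valuation $mp\ge m+1$; this requires $m\ge 1$ (equivalently $m(p-1)\ge 1$), which is precisely why $\alpha=1$ must be handled separately as the base case rather than folded into the induction. The infinite-product aspect poses no real obstacle once one notes that $(q;q)_{\infty}$ is a well-defined element of $\mathbb{Z}[[q]]$ and that all the manipulations above take place at the level of formal power series, so that coefficientwise congruences are preserved under the ring operations used.
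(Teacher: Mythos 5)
Your proof is correct: the base case via the Frobenius/freshman's dream congruence in $\mathbb{Z}[[q]]$, the lifting lemma ($A\equiv B \pmod{p^m}$ implies $A^p\equiv B^p \pmod{p^{m+1}}$), and the induction on $\alpha$ are all carried out with accurate $p$-adic valuation bookkeeping. Note that the paper offers no proof of its own for this lemma---it simply cites Lemma 1.2 of Radu and Sellers---and your argument is exactly the standard one found there, so your write-up in fact supplies the proof the paper omits.
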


\begin{lem}[cf. Theorem 3.3.1,  Theorem 3.5.4 in \cite{Bruce}]\label{r48}
For any integer $n \ge 1$, we have
\[{{r}_{4}}(n)=8\sum\limits_{d|n,4 \nmid d}{d}, \, \,{{r}_{8}}(n)=16{{(-1)}^{n}}\sum\limits_{d|n}{{{(-1)}^{d}}{{d}^{3}}}.\]
\end{lem}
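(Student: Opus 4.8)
The plan is to recognize both formulas as statements about the power-series coefficients of $\varphi(q)^4$ and $\varphi(q)^8$. By the very definition of $r_k(n)$ one has $\varphi(q)^k=\sum_{n\ge 0}r_k(n)q^n$ (with the convention $r_k(0)=1$), so everything reduces to producing closed Lambert-series expansions for these two powers of $\varphi$ and then reading off the coefficient of $q^n$.

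First I would establish the two Lambert-series identities
\[\varphi(q)^4=1+8\sum_{m\ge 1}\frac{mq^m}{1-q^m}-32\sum_{m\ge 1}\frac{mq^{4m}}{1-q^{4m}},\qquad \varphi(q)^8=1+16\sum_{m\ge 1}\frac{m^3q^m}{1-(-1)^m q^m}.\]
The cleanest elementary route is via theta-function manipulations in the style of Berndt: one first records the Jacobi identities relating $\varphi$, $\psi$, and the classical theta functions, and then computes logarithmic derivatives (equivalently, applies the operator $q\,d/dq$ to suitable theta quotients), which is exactly the mechanism that converts products of theta functions into Lambert series carrying the divisor sums $\sum_{d\mid n}d$ and $\sum_{d\mid n}d^3$. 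Alternatively, and more structurally, one can observe that $\varphi(q)^4$ and $\varphi(q)^8$ are holomorphic modular forms on $\Gamma_0(4)$ of weights $2$ and $4$; since the relevant spaces are spanned by Eisenstein series (of dimensions $2$ and $3$), it suffices to match finitely many initial coefficients against an explicit Eisenstein basis to pin down the expansions above.

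With the Lambert series in hand, the coefficient extraction is routine. Expanding $\frac{mq^m}{1-q^m}=\sum_{k\ge 1}mq^{mk}$ shows that the coefficient of $q^n$ in $\varphi(q)^4$ equals $8\sigma_1(n)-32\sigma_1(n/4)$, where $\sigma_1(n)=\sum_{d\mid n}d$ and $\sigma_1(n/4)=0$ unless $4\mid n$. Since the divisors of $n$ divisible by $4$ contribute $\sum_{d\mid n,\,4\mid d}d=4\sigma_1(n/4)$, we get $8\sigma_1(n)-32\sigma_1(n/4)=8\sum_{d\mid n,\,4\nmid d}d$, which is the first formula. For the eight-square formula, expanding the geometric series gives the coefficient of $q^n$ as $16\sum_{m\mid n}(-1)^{m(k-1)}m^3$ with $k=n/m$; since $(-1)^{m(k-1)}=(-1)^{mk}(-1)^m=(-1)^n(-1)^m$, this is $16(-1)^n\sum_{m\mid n}(-1)^m m^3$ directly, as claimed.

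The main obstacle is the proof of the Lambert-series identities themselves, and specifically the weight-four case $\varphi(q)^8$: the weight-two identity for $\varphi^4$ is comparatively classical, whereas extracting the cubic divisor sum requires either a more delicate theta-function computation or a genuine input from the theory of modular forms (the dimension count and the explicit Eisenstein basis on $\Gamma_0(4)$). Once those two identities are secured, the remaining steps are purely formal, and both representation formulas follow by reading off coefficients.
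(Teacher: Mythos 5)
This lemma is Jacobi's classical four- and eight-square theorem, which the paper does not prove at all but simply cites from Berndt, and your outline --- Lambert-series expansions of $\varphi(q)^4$ and $\varphi(q)^8$ followed by coefficient extraction --- is precisely the route of that cited source; your stated identities are the correct ones, and the extraction steps (the sign manipulation $(-1)^{m(k-1)}=(-1)^n(-1)^m$ and the reduction $8\sigma_1(n)-32\sigma_1(n/4)=8\sum_{d\mid n,\,4\nmid d}d$) check out. So the proposal is sound and takes essentially the same approach as the paper's reference, with the two Lambert-series identities serving as the same classical input the paper itself delegates to the literature.
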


\begin{lem}\label{r48modp}
For any prime $p\ge 3$, we have
 \[{{r}_{4}}(pn)\equiv {{r}_{4}}(n)\, (\bmod \,p),{{r}_{8}}(pn)\equiv {{r}_{8}}(n)\, (\bmod \, p^3).\]
\end{lem}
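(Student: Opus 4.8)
The plan is to derive both congruences directly from the explicit formulas for $r_4(n)$ and $r_8(n)$ supplied by Lemma \ref{r48}, by sorting the divisors of $pn$ according to their divisibility by $p$. The key structural fact is that, since $p$ is prime, the divisors of $pn$ fall into two disjoint classes: those coprime to $p$, which are precisely the divisors of $n$ coprime to $p$, and those divisible by $p$, which are precisely the numbers $pe$ with $e\mid n$. Thus any divisor sum over $pn$ decomposes into a sum over divisors of $n$ coprime to $p$ together with a $p$-scaled copy of a sum over all divisors of $n$.

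For the first congruence, I would apply this decomposition to $r_4(pn)=8\sum_{d\mid pn,\,4\nmid d}d$. Because $p\ge 3$ is odd, the condition $4\nmid d$ is preserved under the map $e\mapsto pe$, so the class of divisors divisible by $p$ contributes $8p\sum_{e\mid n,\,4\nmid e}e$, which vanishes modulo $p$. In the remaining sum over divisors of $n$ that are coprime to $p$ and satisfy $4\nmid d$, I can reinsert the divisors of $n$ divisible by $p$ at no cost modulo $p$, since those are themselves multiples of $p$. This recovers exactly $8\sum_{d\mid n,\,4\nmid d}d=r_4(n)$, giving $r_4(pn)\equiv r_4(n)\pmod p$.

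For the second congruence, the same decomposition applied to $r_8(pn)=16(-1)^{pn}\sum_{d\mid pn}(-1)^d d^3$ works even more cleanly. First, $(-1)^{pn}=(-1)^n$ because $p$ is odd, so the sign prefactor matches that of $r_8(n)$. Under $d=pe$ one has $(-1)^d=(-1)^e$ and $d^3=p^3e^3$, so the class of divisors divisible by $p$ contributes $p^3\sum_{e\mid n}(-1)^e e^3\equiv 0\pmod{p^3}$. The complementary sum over divisors coprime to $p$ differs from the full sum $\sum_{d\mid n}(-1)^d d^3$ only by terms with $p\mid d$, each of which carries a factor $d^3$ divisible by $p^3$; hence the two agree modulo $p^3$. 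Multiplying back by $16(-1)^n$ yields $r_8(pn)\equiv r_8(n)\pmod{p^3}$.

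There is no genuinely hard step here; the argument is purely a matter of bookkeeping divisor sums. The only point requiring slight care is checking that the side conditions — the restriction $4\nmid d$ in the $r_4$ case, and the sign $(-1)^d$ together with the cube $d^3$ in the $r_8$ case — behave correctly under the bijection $d\mapsto pe$, all of which follow from $p$ being an odd prime and hence coprime to $2$ (and thus to $4$).
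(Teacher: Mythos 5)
Your proof is correct and follows essentially the same route as the paper: both derive the congruences from the explicit divisor-sum formulas of Lemma \ref{r48} by splitting the divisors of $pn$ into those coprime to $p$ (which match the divisors of $n$ coprime to $p$) and those of the form $pe$ with $e\mid n$, whose contribution is divisible by $p$ (resp.\ $p^3$). Your only addition is to write out the $r_8$ case in full, which the paper dismisses with ``similarly.''
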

\begin{proof}
By Lemma \ref{r48}, we have
\begin{displaymath}
 {{r}_{4}}(n)=8\sum\limits_{d|n,4 \nmid d}{d}=8\sum\limits_{\begin{smallmatrix}
 d|n \\
 4 \nmid d,p \nmid d
\end{smallmatrix}}{d}+8\sum\limits_{\begin{smallmatrix}
 d|n \\
 4 \nmid d, p | d
\end{smallmatrix}}{d}\equiv 8\sum\limits_{\begin{smallmatrix}
 d|n \\
 4 \nmid d,p \nmid d
\end{smallmatrix}}{d}\, (\bmod \, p),
\end{displaymath}
and
\begin{displaymath}
{{r}_{4}}(pn)=8\sum\limits_{d|pn,4 \nmid d}{d}=8\sum\limits_{\begin{smallmatrix}
 d|pn \\
 4\nmid d,p \nmid d
\end{smallmatrix}}{d}+8\sum\limits_{\begin{smallmatrix}
 d|pn \\
 4 \nmid d,p | d
\end{smallmatrix}}{d}=8\sum\limits_{\begin{smallmatrix}
 d|n \\
 4 \nmid d,p \nmid d
\end{smallmatrix}}{d}+8p\sum\limits_{\begin{smallmatrix}
 d|n \\
 4 \nmid d
\end{smallmatrix}}{d}.
\end{displaymath}
Comparing the two identities above,  we see that ${{r}_{4}}(pn)\equiv {{r}_{4}}(n)\, (\bmod \, p).$

Similarly we can prove ${{r}_{8}}(pn)\equiv {{r}_{8}}(n)\, (\bmod \, p^3).$
\end{proof}

\begin{lem}[cf. Theorem 1 in Chapter 4 of \cite{Grosswald}]\label{r30}
For any integers $\alpha  \ge 0$ and $ n\ge 0$, we have ${{r}_{3}}({{4}^{\alpha }}(8n+7))=0$ and ${{r}_{3}}({{4}^{\alpha }}n)={{r}_{3}}(n).$
\end{lem}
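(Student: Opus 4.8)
The plan is to treat this as the elementary, ``non-representability'' half of the Gauss--Legendre three-squares theorem: both assertions follow from congruence considerations for squares modulo $4$ and modulo $8$, together with a simple scaling bijection. The genuinely deep part of the three-squares theorem---that every integer \emph{not} of the form $4^{\alpha}(8n+7)$ \emph{is} a sum of three squares---plays no role here, so no hard input is needed. I would prove the scaling identity first and then the residue obstruction, finally combining the two.

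For the identity $r_3(4^\alpha n)=r_3(n)$, it suffices by induction on $\alpha$ to show $r_3(4m)=r_3(m)$ for every $m\ge 0$. The map $(a,b,c)\mapsto (2a,2b,2c)$ sends each (signed, ordered) representation $m=a^2+b^2+c^2$ to a representation $4m=(2a)^2+(2b)^2+(2c)^2$ with all entries even, and is clearly injective. The point is that it is also surjective: if $4m=x^2+y^2+z^2$, then reducing modulo $4$ and using that a square is $\equiv 0$ or $1\pmod 4$ (namely $0$ when the base is even and $1$ when odd), one finds that the number of odd entries among $x,y,z$ is $\equiv 0\pmod 4$, hence equals $0$. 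Thus $x,y,z$ are all even, and dividing by $2$ recovers a representation of $m$. This establishes the bijection and hence $r_3(4m)=r_3(m)$.

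For the residue obstruction I would show directly that $r_3(8n+7)=0$. Reducing modulo $8$ and using that every square is congruent to $0$, $1$, or $4\pmod 8$, I would enumerate the sums of three such residues; a short check shows the attainable values modulo $8$ are exactly $\{0,1,2,3,4,5,6\}$, so $7$ never occurs. Hence no integer $\equiv 7\pmod 8$ is a sum of three squares.

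Combining the two parts finishes the lemma: by the scaling identity $r_3(4^{\alpha}(8n+7))=r_3(8n+7)$, while the residue obstruction gives $r_3(8n+7)=0$, so $r_3(4^{\alpha}(8n+7))=0$. The only place demanding any care is verifying that the scaling map is surjective, i.e.\ that \emph{every} representation of $4m$ has all parts even; this is precisely what the modulo-$4$ count delivers, and it is the crux of the argument rather than a real obstacle.
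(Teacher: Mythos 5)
Your proof is correct. There is, however, nothing in the paper to compare it against: the lemma is quoted as a known result, cited to Theorem 1 in Chapter 4 of Grosswald's book (the Gauss--Legendre three-squares theorem), and the paper gives no proof of it. Your observation that only the elementary, ``non-representability'' half of that theorem is needed is exactly right, and both of your steps are sound: reducing a representation $4m=x^2+y^2+z^2$ modulo $4$ shows the number of odd entries is $\equiv 0 \pmod 4$ yet at most $3$, hence zero, so the doubling map is a bijection and $r_3(4m)=r_3(m)$, giving $r_3(4^\alpha n)=r_3(n)$ by induction; and since squares are $0$, $1$, or $4 \pmod 8$, a short enumeration shows no three of them sum to $7 \pmod 8$, so $r_3(8n+7)=0$. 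Combining the two yields the lemma. If anything, your argument improves on the paper for its stated purpose: the author emphasizes that the proof of the Hirschhorn--Sellers conjecture is elementary, and your self-contained argument avoids invoking the deep direction of the three-squares theorem (that every integer not of the form $4^\alpha(8n+7)$ \emph{is} a sum of three squares), which the citation formally drags in but which is never used.
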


\begin{lem}[cf. \cite{Hsquare}]\label{r3relation}
Let $p \ge 3$ be a prime, for any  integers $n \ge 1$  and $\alpha \ge 0$,  we have
\[{{r}_{3}}({{p}^{2 \alpha }}n)=\Bigg(\frac{{{p}^{\alpha +1}}-1}{p-1}-\Big(\frac{-n}{p}\Big)\frac{{{p}^{\alpha }}-1}{p-1}\Bigg){{r}_{3}}(n)-p\frac{{{p}^{\alpha }}-1}{p-1}{{r}_{3}}(n/{{p}^{2}}).\]
where $(\frac{\cdot }{p})$ denotes the Legendre symbol,  and we take ${{r}_{3}}(n/{{p}^{2}})=0$ unless  ${{p}^{2}} | n$ .
\end{lem}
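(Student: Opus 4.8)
The plan is to deduce the whole family of identities from its $\alpha=1$ instance by recognizing a two-term linear recurrence in $\alpha$. Fix $n$ and write $c_\alpha := r_3(p^{2\alpha}n)$. The starting point is the classical relation that is the $\alpha=1$ case of the assertion, namely
\[ r_3(p^2 m) = \Big(p+1 - \big(\tfrac{-m}{p}\big)\Big) r_3(m) - p\, r_3(m/p^2), \]
valid for every $m \ge 1$ (with the usual convention $r_3(m/p^2)=0$ unless $p^2 \mid m$). I would regard this base identity as the genuine arithmetic input: it is exactly the action of the half-integral weight Hecke operator $T_{p^2}$ on the Fourier coefficients of the weight $3/2$ theta series $\varphi(q)^3 = \sum_n r_3(n) q^n$, which is an eigenform with eigenvalue $p+1$; equivalently it is the classical recursion governing primitive and imprimitive ternary representations in Gauss's class number theory. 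I would cite this from \cite{Hsquare} (or from Grosswald) rather than reprove it, since this is where all the number-theoretic weight sits and it is the main obstacle.

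Granting the base identity, the key observation is that applying it with $m = p^{2\alpha-2}n$ for $\alpha \ge 2$ kills the Legendre symbol: then $p \mid m$, so $\big(\tfrac{-m}{p}\big)=0$, while $r_3(m/p^2) = r_3(p^{2\alpha-4}n) = c_{\alpha-2}$. Hence for all $\alpha \ge 2$,
\[ c_\alpha = (p+1)\, c_{\alpha-1} - p\, c_{\alpha-2}, \]
a homogeneous constant-coefficient recurrence whose characteristic polynomial factors as $x^2-(p+1)x+p=(x-1)(x-p)$. Therefore $c_\alpha = A + B\, p^\alpha$ for constants $A,B$ depending only on $n$.

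It then remains to fit $A$ and $B$ to the initial data $c_0 = r_3(n)$ and, from the base identity, $c_1 = (p+1-\big(\tfrac{-n}{p}\big))r_3(n) - p\, r_3(n/p^2)$. Solving the $2\times 2$ system gives
\[ B = \frac{(p-\big(\tfrac{-n}{p}\big))\,r_3(n) - p\,r_3(n/p^2)}{p-1}, \qquad A = r_3(n) - B, \]
and substituting into $c_\alpha = A + B\,p^\alpha$ and collecting the coefficients of $r_3(n)$ and of $r_3(n/p^2)$ reproduces exactly the quantities $\frac{p^{\alpha+1}-1}{p-1} - \big(\tfrac{-n}{p}\big)\frac{p^\alpha-1}{p-1}$ and $-p\frac{p^\alpha-1}{p-1}$ appearing in the statement; the degenerate case $\alpha=0$ is just $c_0=r_3(n)$, which the formula recovers. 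Thus, once the base identity is accepted, the remainder is the elementary solution of a linear recurrence, and the only computation worth double-checking is that the algebraic simplification of $A+B\,p^\alpha$ matches the stated closed form.
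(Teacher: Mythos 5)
Your argument is mathematically sound, but note that the paper itself offers no proof of this lemma: it is stated as a preliminary and cited directly from Hirschhorn and Sellers \cite{Hsquare}, so there is no internal proof to compare against. What you have done is supply a derivation the paper omits, and it is a genuine reduction: you only need to quote the $\alpha=1$ case, and the general case then follows by elementary algebra. The key steps all check out. For $\alpha\ge 2$, taking $m=p^{2\alpha-2}n$ in the base identity forces $p\mid m$, so the Legendre symbol vanishes and your sequence $c_\alpha = r_3(p^{2\alpha}n)$ satisfies $c_\alpha=(p+1)c_{\alpha-1}-p\,c_{\alpha-2}$; the characteristic roots $1$ and $p$ are distinct since $p\ge 3$, so $c_\alpha=A+Bp^{\alpha}$; and fitting the values at $\alpha=0,1$ gives, after simplification,
\[ c_\alpha=\Bigg(\frac{p^{\alpha+1}-1}{p-1}-\Big(\frac{-n}{p}\Big)\frac{p^{\alpha}-1}{p-1}\Bigg)r_3(n)-p\,\frac{p^{\alpha}-1}{p-1}\,r_3(n/p^{2}), \]
which is exactly the stated formula (the coefficient of $r_3(n)$ collapses via $1+p(p^{\alpha}-1)/(p-1)=(p^{\alpha+1}-1)/(p-1)$, as you indicate). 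One should also note, as you implicitly do, that the closed form is itself of the shape $A+Bp^{\alpha}$, so agreement at $\alpha=0,1$ plus the common recurrence yields agreement for all $\alpha$ by induction; and your convention handling is correct, since for $\alpha\ge 2$ the term $r_3(m/p^{2})$ is genuinely $c_{\alpha-2}$ because $p^{2}\mid m$ automatically. The trade-off is that your proof is only as unconditional as its cited base case; but since the $\alpha=1$ identity is classical (it expresses that $\varphi(q)^{3}$ is an eigenform of the Hecke operator $T_{p^{2}}$ with eigenvalue $p+1$, and is proved elementarily in \cite{Hsquare}), your presentation is a legitimate and arguably more transparent route than the paper's bare citation of the full statement.
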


\begin{lem}[cf. Theorem 3 in \cite{Kim1}]\label{mod8}
Let $n$ be an integer which is neither a square nor twice a square, then $\overline{p}(n)\equiv 0 \, (\bmod \, 8)$.
\end{lem}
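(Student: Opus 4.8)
The plan is to first reduce the statement to a congruence for the number of representations as a sum of three squares, and then to analyze that quantity modulo $8$ by a counting argument. Starting from \eqref{gen}, I would use the product identity $\varphi(-q)=(q;q)_{\infty}^{2}/(q^{2};q^{2})_{\infty}$ together with Lemma \ref{palpha} (taken with $p=2$ and $\alpha=3$, so that $(q;q)_{\infty}^{8}\equiv(q^{2};q^{2})_{\infty}^{4}\pmod{8}$) to show that
\[\varphi(-q)^{4}=\frac{(q;q)_{\infty}^{8}}{(q^{2};q^{2})_{\infty}^{4}}\equiv 1\pmod{8}.\]
Since $\varphi(-q)$ is a unit in $\mathbb{Z}[[q]]$, this yields $\sum_{n\ge 0}\overline{p}(n)q^{n}=\varphi(-q)^{-1}\equiv\varphi(-q)^{3}\pmod{8}$. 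Reading off the coefficient of $q^{n}$ in $\varphi(-q)^{3}=\big(\sum_{m}(-1)^{m}q^{m^{2}}\big)^{3}$ gives the clean intermediate congruence
\[\overline{p}(n)\equiv(-1)^{n}r_{3}(n)\pmod{8}.\]
Thus it suffices to prove that $r_{3}(n)\equiv 0\pmod{8}$ whenever $n$ is neither a square nor twice a square.

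For that final step I would count lattice points on the sphere $x_{1}^{2}+x_{2}^{2}+x_{3}^{2}=n$ according to how many coordinates vanish, exploiting the free sign-change action of $(\mathbb{Z}/2)^{3}$. A solution with all three coordinates nonzero lies in an orbit of size $8$, so all such solutions together contribute a multiple of $8$; hence modulo $8$ only solutions with at most two nonzero coordinates matter. Solutions with exactly one nonzero coordinate occur only when $n$ is a nonzero square, and then number $6$; solutions with exactly two nonzero coordinates number $3\big(r_{2}(n)-4\,[\,n\text{ a square}\,]\big)$, where $r_{2}$ is the two-square count. Combining these and using $2r_{2}(n)\equiv 0\pmod 8$ reduces the problem to the residue of $r_{2}(n)$ modulo $8$.

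I would then treat $r_{2}(n)$ by the same device, now with the larger symmetry group generated by sign changes and the coordinate swap: any representation $n=x^{2}+y^{2}$ with $0\ne|x|\ne|y|\ne 0$ sits in an orbit of size $8$, while the only degenerate orbits are the four points with a zero coordinate (present iff $n$ is a square) and the four points with $|x|=|y|$ (present iff $n$ is twice a square), each contributing $4$. This gives $r_{2}(n)\equiv 4\,[\,n\text{ a square}\,]+4\,[\,n\text{ twice a square}\,]\pmod 8$, and feeding it back produces $r_{3}(n)\equiv 6\,[\,n\text{ a square}\,]+4\,[\,n\text{ twice a square}\,]\pmod 8$. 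Since no positive integer is simultaneously a square and twice a square, the right-hand side is nonzero exactly in the two exceptional cases, and the lemma follows. (Alternatively, one may quote Jacobi's two-square theorem $r_{2}(n)=4(d_{1}(n)-d_{3}(n))$ in place of the second orbit count.)

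The routine reduction in the first paragraph is painless; the real work, and the place to be careful, is the bookkeeping of the degenerate orbits in the last two paragraphs. The main obstacle is making sure the exceptional orbits (coordinates equal in absolute value, or one coordinate zero) are enumerated without over- or under-counting and that their combined contributions match the predicate ``$n$ is a square or twice a square'' exactly; getting the constants $6$ and $4$ right, and checking they never coincide (which uses the irrationality of $\sqrt{2}$), is what makes the characterization sharp.
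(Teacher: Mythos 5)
Your proof is correct, but it cannot be compared step-by-step with the paper's, because the paper gives no proof of this lemma at all: it is imported as a black box from Kim (Theorem 3 of \cite{Kim1}) and invoked only in the proof of Theorem \ref{genthm}. What you have produced is a self-contained replacement for that citation, and your route is in fact the natural mod-$8$ analogue of the paper's own technique: your first paragraph (the identity $\varphi(-q)=(q;q)_{\infty}^{2}/(q^{2};q^{2})_{\infty}$, Lemma \ref{palpha} with $p=2$, $\alpha=3$, and division by the unit $\varphi(-q)^{4}\equiv 1\pmod{8}$, with the parity fact $m^{2}\equiv m\pmod{2}$ supplying the sign) mirrors exactly the proofs of Theorem \ref{thm1} and Theorem \ref{Lov}, and yields the correct intermediate congruence $\overline{p}(n)\equiv(-1)^{n}r_{3}(n)\pmod{8}$. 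Your orbit-counting second half is also sound: the degenerate-orbit bookkeeping gives $r_{2}(n)\equiv 4\pmod{8}$ precisely when $n$ is a square or twice a square (these cases never overlap, by irrationality of $\sqrt{2}$), and feeding this back gives $r_{3}(n)\equiv 6\pmod{8}$ when $n$ is a nonzero square, $r_{3}(n)\equiv 4\pmod{8}$ when $n$ is twice a square, and $r_{3}(n)\equiv 0\pmod{8}$ otherwise; I spot-checked the constants ($r_{3}(9)=30$, $r_{3}(8)=12$, $r_{3}(5)=24$) and they are right, as is your use of $r_{2}(n)\equiv 0\pmod{4}$ to tame the factor $3$. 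What your approach buys is significant: it removes the paper's only outside dependence in the proof of Conjecture \ref{conj}, making the whole argument for Theorem \ref{genthm} elementary and self-contained, and as a bonus your exact evaluation of $r_{3}(n)$ modulo $8$ shows the lemma is sharp, since $\overline{p}(n)\not\equiv 0\pmod{8}$ in both excluded cases.
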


\begin{lem}[cf. \cite{Cooper}]\label{r5relation}
Let $p\ge 3$ be a prime, $n$ a positive integer and ${{p}^{2}} \nmid n$. For any integer $\alpha \ge 0$, we have
\[{{r}_{5}}({{p}^{2\alpha }}n)=\Bigg(\frac{{{p}^{3\alpha +3}}-1}{{{p}^{3}}-1}-p\Big(\frac{n}{p}\Big)\frac{{{p}^{3\alpha }}-1}{{{p}^{3}}-1}\Bigg){{r}_{5}}(n).\]
\end{lem}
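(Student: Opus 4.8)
The final statement is Lemma \ref{r5relation}, which is quoted from \cite{Cooper}, so strictly speaking it carries no proof in this paper. Nonetheless, since it is a clean recursion for $r_5$ on prime-power dilations, here is how I would establish it from scratch.

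The plan is to exploit the fact that $r_5(n)$ is (up to normalization) a Fourier coefficient of the weight-$5/2$ theta series $\varphi(q)^5 = \sum_n r_5(n) q^n$, which is a modular form. The standard route is to pass to the associated Hecke-type operators: weight-$k/2$ forms on $\Gamma_0(4)$ admit half-integral-weight Hecke operators $T_{p^2}$ (Shimura's theory), and the action of $T_{p^2}$ on Fourier coefficients yields, for $p^2 \nmid n$, a three-term relation whose iteration produces exactly the closed form in the statement. So first I would identify the eigenform structure: because the space of cusp forms in the relevant weight and level is trivial for small enough level, $\varphi^5$ is an eigenform (or lies in an Eisenstein-type component on which $T_{p^2}$ acts by a known eigenvalue), and I would compute that eigenvalue to be $\lambda_p = p^3 + p^2\left(\frac{\cdot}{p}\right)$-type data coming from the weight $5/2$. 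The exponent $3$ in $p^{3}$ reflects $2k-1 = 2\cdot 2 = 4$ shifted appropriately for weight $5/2$, i.e. $p^{2(k-1)} = p^{3}$ with $k = 5/2$.

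Concretely, I would first record the two-step Hecke recursion. Writing $a(n) = r_5(n)$, the operator $T_{p^2}$ gives, for the eigenform,
\[
a(p^2 n) + p\,\chi(p)\Big(\frac{n}{p}\Big) a(n) + p^{2k-2} a(n/p^2) = \lambda_p\, a(n),
\]
with $k=5/2$ so $2k-2 = 3$, and for $p^2 \nmid n$ the last term vanishes. This collapses to a two-term recurrence
\[
a(p^{2}n) = \big(\lambda_p - p\,\chi(p)\big(\tfrac{n}{p}\big)\big)\,a(n).
\]
Then I would solve the recursion in $\alpha$. Setting $b_\alpha = a(p^{2\alpha}n)$ and using that for $\alpha \ge 1$ the residue $n$ is replaced by $p^2$-multiples (so the Legendre symbol factor stabilizes after the first step), the linear recurrence has a geometric-series solution, and summing the geometric progressions $\sum_{j=0}^{\alpha} p^{3j}$ and $\sum_{j=0}^{\alpha-1} p^{3j}$ produces the closed forms $\frac{p^{3\alpha+3}-1}{p^3-1}$ and $\frac{p^{3\alpha}-1}{p^3-1}$ appearing in the statement. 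This is the main bookkeeping step: one must track carefully how $\left(\frac{n}{p}\right)$ interacts with the dilation and confirm that only the \emph{first} application of $T_{p^2}$ contributes the Legendre-symbol term while all subsequent applications contribute pure $p^3$-powers, which is exactly what makes the two geometric sums have the asymmetric exponents $3\alpha+3$ versus $3\alpha$.

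The main obstacle, and the part requiring genuine care rather than routine computation, is justifying that $\varphi^5$ behaves as a Hecke eigenform with the stated eigenvalue. Half-integral weight complicates the Hecke theory (the naive $T_p$ is replaced by $T_{p^2}$, and one must use the correct normalization of the Shimura correspondence), and one must verify there is no cuspidal contribution that would spoil the clean eigenvalue, or else compute the eigenvalue of each isotypic piece separately and check they agree on the coefficients $r_5(n)$. An alternative, fully elementary route that sidesteps modular forms would be to use the classical explicit formula for $r_5(n)$ in terms of a finite sum of Jacobi-symbol twisted divisor-type functions (due to Eisenstein/Smith/Minkowski and worked out explicitly by Cooper and others), and then verify the recursion purely arithmetically by manipulating those divisor sums under the substitution $n \mapsto p^{2\alpha} n$; I expect that approach to reduce the whole lemma to a multiplicativity identity for the arithmetic function $\delta_5(n) = \sum_{d\mid n}\big(\frac{\cdot}{d}\big)d^{3}$-style sums, which is the form in which \cite{Cooper} presumably phrases it.
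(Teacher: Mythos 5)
You are right that the paper itself offers no proof of this lemma: it is quoted directly from Cooper, so there is no internal argument to compare against and your sketch must stand on its own. The skeleton of your modular route is sound, and it is worth saying that the bookkeeping you describe really does work: the weight-$5/2$ three-term Hecke relation collapses, when $p^{2}\nmid n$, to a two-term relation; at every later step the Legendre symbol vanishes because its argument is divisible by $p$; and the resulting linear recurrence $b_{\alpha+1}=\lambda_p b_\alpha-p^{3}b_{\alpha-1}$, $b_1=\big(\lambda_p-p\big(\tfrac{n}{p}\big)\big)b_0$, has general solution $A+Bp^{3\alpha}$, which is exactly the combination of geometric sums $\frac{p^{3\alpha+3}-1}{p^{3}-1}$ and $\frac{p^{3\alpha}-1}{p^{3}-1}$ in the statement.

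The genuine gap is the one you flag but do not close: everything hinges on the eigenvalue $\lambda_p$, which you never determine, and the one value you float, ``$\lambda_p=p^{3}+p^{2}\big(\tfrac{\cdot}{p}\big)$-type data,'' cannot be correct. An eigenvalue of $T_{p^2}$ on a fixed form is a number depending only on $p$; the Legendre symbol in $n$ enters only through the coefficient-wise action of $T_{p^2}$, which your own three-term relation already accounts for, so it cannot also sit inside $\lambda_p$. Moreover the lemma itself (take $\alpha=1$ and compare with your collapsed relation) forces $\lambda_p=p^{3}+1$ exactly: with any other value the characteristic polynomial $x^{2}-\lambda_p x+p^{3}$ does not have roots $1$ and $p^{3}$, and the two geometric sums never materialize. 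So the ``main obstacle'' you identify is not a technicality --- it is the entire content of the lemma, and it remains open in your write-up. To close it one needs two facts: $S_{5/2}(\Gamma_0(4))=0$, so the two-dimensional space $M_{5/2}(\Gamma_0(4))$ containing $\varphi^{5}$ is spanned by Eisenstein series; and every $T_{p^2}$ with $p$ odd acts on that Eisenstein space as the scalar $1+p^{3}$, which one can see by passing (Hecke-equivariantly) to the Eisenstein subspace of $M_4(\Gamma_0(2))$, spanned by $E_4(z)$ and $E_4(2z)$, where the eigenvalue $1+p^{3}$ follows from the elementary identity $\sigma_3(pm)+p^{3}\sigma_3(m/p)=(1+p^{3})\sigma_3(m)$, with $\sigma_3(n)=\sum_{d\mid n}d^{3}$. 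Your proposed elementary alternative --- verifying the recursion against an explicit divisor-sum formula for $r_5$ --- is viable and is essentially what Cooper does; but as written, neither route is carried out, so the proposal is an outline of a correct strategy rather than a proof.
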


\section{Proofs of The Theorems}

\begin{proof}[Proof of Theorem \ref{thm1}]
Replace $q$ by $-q$ in (\ref{gen}) we get
\[\sum\limits_{n\ge 0}{\overline{p}(n){{(-q)}^{n}}}=\frac{1}{\varphi (q)},\]
hence we have
\[\varphi {{(q)}^{5}}\sum\limits_{n\ge 0}{\overline{p}(n){{(-q)}^{n}}}=\varphi {{(q)}^{4}}=\sum\limits_{n\ge 0}{{{r}_{4}}(n){{q}^{n}}}.\]
By Lemma \ref{palpha}, we have $\varphi {{(q)}^{5}}\equiv \varphi ({{q}^{5}})\,(\bmod \,5)$ and thus
\[\varphi ({{q}^{5}})\sum\limits_{n\ge 0}{\overline{p}(n){{(-q)}^{n}}}\equiv \sum\limits_{n\ge 0}{{{r}_{4}}(n){{q}^{n}}}\,(\bmod \,5).\]
Collecting  all the terms of the form ${{q}^{5n}}$  on both sides, we get
\[\varphi ({{q}^{5}})\sum\limits_{n\ge 0}{\overline{p}(5n){{(-q)}^{5n}}}\equiv \sum\limits_{n\ge 0}{{{r}_{4}}(5n){{q}^{5n}}}\,(\bmod \,5),\]
replace ${{q}^{5}}$ by $q$  and apply Lemma \ref{r48modp} with $p=5$  we obtain
\[\varphi (q)\sum\limits_{n\ge 0}{\overline{p}(5n){{(-q)}^{n}}}\equiv \sum\limits_{n\ge 0}{{{r}_{4}}(5n){{q}^{n}}}\equiv \sum\limits_{n\ge 0}{{{r}_{4}}(n){{q}^{n}}}=\varphi {{(q)}^{4}}\,(\bmod \,5).\]
Hence we have
\[\sum\limits_{n\ge 0}{\overline{p}(5n){{(-q)}^{n}}}\equiv \varphi {{(q)}^{3}}=\sum\limits_{n\ge 0}{{{r}_{3}}(n){{q}^{n}}}\,(\bmod \,5),\]
and Theorem \ref{conj} follows by comparing the coefficients of ${{q}^{n}}$ on both sides.
\end{proof}

\begin{proof}[Proof of Corollary \ref{thm1cor1}]
 This corollary follows immediately by Theorem \ref{thm1} and Lemma \ref{r30}.
\end{proof}
\begin{proof}[Proof of Corollary \ref{Tre}]
By Theorem \ref{thm1}, we can replace ${{r}_{3}}(n)$ by ${{(-1)}^{n}}{{\overline{p}}}(5n)$ throughout  Lemma \ref{r3relation} with $\alpha =1$,  we deduce that
\[{{\overline{p}}}(5{{p}^{2}}m)\equiv \Big(p+1-(\frac{-m}{p})\Big){{\overline{p}}}(m)-p{{\overline{p}}}(m/{{p}^{2}}) \, (\bmod \, 5).\]
 Let $m=np$, then $\overline{p}(m/{{p}^{2}})=\overline{p}(n/p)=0$ since $n$ is coprime to $p$,  the theorem follows immediately.
\end{proof}

 \begin{proof}[Proof of Theorem \ref{genthm}]
Since $40n+35=5(8n+7)$ is an odd number, it can not be twice a square.  If $5(8n+7)={{x}^{2}}$ is a square where $x$ is an odd number, then we know $5|x$.  Let $x=5y$ where $y$ is an odd number, we get $8n+7=5{{y}^{2}}$, but $5{{y}^{2}}\equiv 5 \, (\bmod \, 8)$, this is a contradiction. Hence we know ${{4}^{\alpha }}(40n+35)$ is neither a square nor twice a square, by Lemma \ref{mod8} we have $\overline{p}({{4}^{\alpha }}(40n+35))\equiv 0 \, (\bmod \, 8)$, combining this with Corollary \ref{thm1cor1} we complete our proof.
\end{proof}

\begin{proof}[Proof of Theorem \ref{5alpha}]
Set $p=5$ in Lemma \ref{r3relation}, $n=5m+r,r\in \{1,4\}$, it's easy to deduce ${{r}_{3}}({{5}^{2\alpha }}(5m+r))\equiv 0 \, (\bmod \, 5)$ for any integer $\alpha \ge 1$. By Theorem \ref{thm1} we complete our proof.
\end{proof}

\begin{proof}[Proof of Theorem \ref{2case}]
(1)  Let $n=pN$ in Lemma \ref{r3relation}, then replace $\alpha $ by $5\alpha +4$, we have
\[\frac{{{p}^{5\alpha +5}}-1}{p-1}=1+p+\cdots +{{p}^{5\alpha +4}}\equiv 0 \, (\bmod  \, 5),\]
hence ${{r}_{3}}({{p}^{10\alpha +9}}N)\equiv 0  \, (\bmod  \, 5).$ By Theorem \ref{thm1} we deduce $\overline{p}(5{{p}^{10\alpha +9}}N)\equiv 0 \, (\bmod  \, 5).$ \\
(2)  Let $n=pN$ in Lemma \ref{r3relation}, then replace $\alpha $ by $4\alpha +3$, since  ${{p}^{4\alpha +4}}\equiv 1 \, (\bmod  \, 5)$, we deduce ${{r}_{3}}({{p}^{8\alpha +7}}N)\equiv 0 \, (\bmod  \, 5),$ by Theorem \ref{thm1} we deduce $\overline{p}(5{{p}^{8\alpha +7}}N)\equiv 0 \, (\bmod  \, 5).$
\end{proof}

 \begin{proof}[Proof of Theorem \ref{Lov}]
We have
\[\varphi {{(q)}^{9}}\sum\limits_{n\ge 0}{\overline{p}(n){{(-q)}^{n}}}=\varphi {{(q)}^{8}}=\sum\limits_{n\ge 0}{{{r}_{8}}(n){{q}^{n}}}.\]
Thanks to Lemma \ref{palpha}, we have $\varphi {{(q)}^{9}}\equiv \varphi {{({{q}^{3}})}^{3}}\,(\bmod \,  9)$ and thus
\[\varphi {{({{q}^{3}})}^{3}}\sum\limits_{n\ge 0}{\overline{p}(n){{(-q)}^{n}}}\equiv \sum\limits_{n\ge 0}{{{r}_{8}}(n){{q}^{n}}}\,(\bmod \,  9).\]
Collecting  all the terms of the form ${{q}^{3n}}$ on both sides, we get
\[\varphi {{({{q}^{3}})}^{3}}\sum\limits_{n\ge 0}{\overline{p}(3n){{(-q)}^{3n}}}\equiv \sum\limits_{n\ge 0}{{{r}_{8}}(3n){{q}^{3n}}}\,(\bmod \,  9),\]
replace ${{q}^{3}}$ by $q$  and apply Lemma \ref{r48modp} with $p=3$, we obtain
\[\varphi {{(q)}^{3}}\sum\limits_{n\ge 0}{\overline{p}(3n){{(-q)}^{n}}}\equiv \sum\limits_{n\ge 0}{{{r}_{8}}(3n){{q}^{n}}}\equiv \sum\limits_{n\ge 0}{{{r}_{8}}(n){{q}^{n}}}=\varphi {{(q)}^{8}}\,(\bmod \,  9).\]
Hence we have
\[\sum\limits_{n\ge 0}{\overline{p}(3n){{(-q)}^{n}}}\equiv \varphi {{(q)}^{5}}=\sum\limits_{n\ge 0}{{{r}_{5}}(n){{q}^{n}}}\,(\bmod \,  9),\]
and Theorem \ref{Lov} follows by comparing the coefficients of ${{q}^{n}}$ on both sides.
\end{proof}

\begin{proof}[Proof of Theorem \ref{Lovcor}]
(1)  Let $n=pN$ in Lemma \ref{r5relation}, then replace $\alpha $ by $3\alpha +2$, we have
\[\frac{{{p}^{9\alpha +9}}-1}{{{p}^{3}}-1}=1+{{p}^{3}}+\cdots +{{p}^{3(3\alpha +2)}}\equiv 0 \, (\bmod  \, 3),\]
hence ${{r}_{5}}({{p}^{6\alpha +5}}N)\equiv 0 \, (\bmod  \, 3).$ By Theorem \ref{Lov} we deduce $\overline{p}(3{{p}^{6\alpha +5}}N)\equiv 0 \, (\bmod  \, 3).$

Similarly, let $n=pN$ in Lemma \ref{r5relation} and replace $\alpha $ by $9\alpha +8$.  Since $p\equiv 1 \, (\bmod \, 3)$ implies ${{p}^{3}}\equiv 1 \, (\bmod  \, 9)$, we have
\[\frac{{{p}^{27\alpha +27}}-1}{{{p}^{3}}-1}=1+{{p}^{3}}+\cdots +{{p}}^{3(9\alpha +8)}\equiv 0 \, (\bmod  \, 9).\]
Hence ${{r}_{5}}({{p}^{18\alpha +17}}N)\equiv 0 \, (\bmod  \, 9)$, by Theorem \ref{Lov} we deduce $\overline{p}(3{{p}^{18\alpha +17}}N)\equiv 0  \, (\bmod   \, 9)$. \\
(2) Let   $n=pN$ in Lemma \ref{r5relation}, then replace $\alpha $ by $2\alpha +1$. Note that $p\equiv 2 \,(\bmod  \,3)$ implies  ${{p}^{3}}\equiv -1 \,(\bmod \,9)$. Since ${{p}^{6\alpha +6}}\equiv 1 \, (\bmod  \, 9)$, we have ${{r}_{5}}({{p}^{4\alpha +3}}N)\equiv 0 \, (\bmod  \, 9)$.  By Theorem  \ref{Lov} we deduce $\overline{p}(3{{p}^{4\alpha +3}}N)\equiv 0 \, (\bmod  \, 9).$
\end{proof}

\bibliographystyle{amsplain}

\end{document}